\newtheorem{Assumption}{Assumption}
\newcommand{\Z}{\mathbb{Z}}
\newcommand{\R}{\mathbb{R}}
\DeclareMathOperator{\st}{s.t.}
\DeclareMathOperator{\conv}{conv}
\newcommand{\BPs}{BPs\xspace}
\newcommand{\MIBLOP}{MIBLP\xspace}
\newcommand{\MIBLOPs}{MIBLPs\xspace}
\newcommand{\VFR}{VFR\xspace}
\newcommand{\HPR}{HPR\xspace}
\newcommand{\BB}{B\&B\xspace}
\newcommand{\BC}{B\&C\xspace}
\newcommand{\IBNP}{IBNP\xspace}
\newcommand{\IBNPs}{IBNPs\xspace}
\newcommand{\SOCP}{SOCP\xspace}
\def\yy{\hat{y}}
\def\xh{\hat{x}}
\newcommand{\exSymb}[1]{\hat{#1}}
\newcommand{\linq}{g}
\newcommand{\DCs}{DCs\xspace}
\newcommand{\DC}{DC\xspace}
\newcommand{\MIX}{\texttt{MIX++}\xspace}
\newcommand{\textoverline}[1]{$\overline{\mbox{#1}}$}
\newcommand{\ccrHPR}{\textoverline{\HPR}\xspace}
\begin{document}

\title{SOCP-based disjunctive cuts for a class of 
integer nonlinear bilevel programs\thanks{
This research was funded in whole, or in part, by the Austrian Science Fund (FWF) [P 35160-N]. For the purpose of open access, the author has applied a CC BY public copyright licence to any Author Accepted Manuscript version arising from this submission.
It is also supported by the Johannes Kepler University Linz, Linz Institute of Technology (Project LIT-2019-7-YOU-211) and the JKU Business School. J. Lee was supported on this project by ESSEC and by ONR grant N00014-21-1-2135. }}

\titlerunning{SOCP-based disjunctive cuts for a class of \IBNP}

\author{Elisabeth Gaar$^{(\text{\Letter})}$\inst{1}\orcidID{0000-0002-1643-6066} \and
Jon Lee \inst{2}\orcidID{0000-0002-8190-1091} \and \\
Ivana Ljubi\'c\inst{3}\orcidID{0000-0002-4834-6284} \and
Markus Sinnl\inst{1,4}\orcidID{0000-0003-1439-8702} \and \\
K\"ubra Tan{\i}nm{\i}\c{s} \inst{1}\orcidID{0000-0003-1081-4182} 
}

\authorrunning{E. Gaar et al.}
\institute{
	Institute of Production and Logistics Management, Johannes Kepler University Linz, Austria, 
	\email{\{elisabeth.gaar, markus.sinnl, kuebra.taninmis\_ersues\}@jku.at}\\
	\and
	University of Michigan, Ann Arbor, Michigan, USA,
	\email{jonxlee@umich.edu}
	\and
	ESSEC Business School of Paris, France, \email{ljubic@essec.edu}
	\and
	JKU Business School, Johannes Kepler University Linz, Austria
}

\maketitle              

\begin{abstract}
We study a class of bilevel integer programs with second-order cone constraints at the upper level and a convex quadratic objective and linear constraints at the lower level. We develop disjunctive cuts to separate bilevel infeasible points using a second-order-cone-based cut-generating procedure. 
To the best of our knowledge, this is the first time disjunctive cuts are studied in the context of discrete bilevel optimization.
Using these disjunctive cuts, we establish a branch-and-cut algorithm for the problem class we study, and a cutting plane method for the problem variant with only binary variables.
We present a preliminary computational study on instances with no second-order cone constraints at the upper level and a single linear constraint at the lower level. Our study demonstrates that both our approaches outperform a state-of-the-art generic solver for mixed-integer bilevel linear programs that is able to solve a linearized version of our test instances, 
where the non-linearities are linearized in a McCormick fashion.
	\keywords{bilevel optimization  \and disjunctive cuts \and conic optimization \and nonlinear optimization \and branch-and-cut.}
\end{abstract}

\section{Introduction}

Bilevel programs (\BPs) are challenging hierarchical optimization problems, in which the feasible solutions of the
upper level problem depend on the optimal solution of the lower level problem. \BPs allow to model 
two-stage two-player Stackelberg games in which two 
rational players
(often called \emph{leader} and \emph{follower}) compete in a sequential fashion. 
\BPs have applications in many different domains such as machine learning
\cite{agor2019feature}, logistics 
\cite{fontaine2020population}, revenue 
management 
\cite{labbe2016bilevel}, the energy sector 
\cite{grimm2021optimal,plein2021bilevel}
and portfolio optimization \cite{gonzalez2021global}. For more details about 
\BPs see, e.g., 
the book by Dempe and Zemkoho~\cite{dempe2020bilevel} and two recent 
surveys~\cite{kleinert2021survey,smith2020survey}.

In this work, 
we consider the following 
integer nonlinear bilevel programs with convex leader and follower objective functions (\IBNPs)
\begin{subequations}
\label{bilevel}
\begin{align}
&\min ~c'x + d'y \label{eq:objective}\\ 
\st~ &Mx+Ny \geq h\\
&\tilde{M}x+\tilde{N}y -  \tilde{h} \in \mathcal{K} \label{eq:conic}\\
&y\in \arg \min\left\{
q(y)  : Ax+By \geq f,~ y \in 
\mathcal{Y}, ~y \in \mathbb Z^{n_2} 
\right\}\\
&x \in \mathbb Z^{n_1},
\end{align}
\end{subequations}
where
the decision variables $x$ and $y$ are of dimension $n_1$ and $n_2$, 
respectively, and $n=n_1+n_2$.
%
Moreover, we have
$c \in \R^{n_1}$, 
$d \in \R^{n_2}$, 
$M \in \R^{m_1 \times n_1}$, 
$N \in \R^{m_1 \times n_2}$, 
$h \in \R^{           m_1}$, 
$\tilde{M} \in \R^{\tilde{m}_1 \times n_1}$, 
$\tilde{N} \in \R^{\tilde{m}_1 \times n_2}$, 
$\tilde{h} \in \R^{\tilde{m}_1}$, 
$A \in \Z^{m_2 \times n_1}$, 
$B \in \Z^{m_2 \times n_2}$, and 
$f \in \Z^{m_2 }$. We assume that each row of $A$ and $B$ has at least one non-zero entry and 
the constraints $Ax+By \geq f$ are referred to as \emph{linking constraints}.
Furthermore, 
$q(y)$ is a convex 
quadratic function of the form
$q(y) = y'Ry + \linq'y$ with $R=V'V$ and 
$V \in \R^{n_3 \times n_2}$ with $n_3 \leq n_2$,
$\mathcal{K}$ is a given
 cross-product of
second-order cones, and $\mathcal{Y}$ is a polyhedron.

Note that even though we formulate the objective function \eqref{eq:objective} as linear, 
we can actually consider any convex objective function which can be represented as a second-order cone constraint and 
whose optimal value is integer when $(x,y) \in \mathbb{Z}^{n}$ (e.g., a convex quadratic polynomial with integer coefficients). To do so, we can use an epigraph reformulation to transform it into a problem of the form~\eqref{bilevel}.
%

Our work considers the \emph{optimistic} case of bilevel optimization. This means that whenever there are multiple optimal solutions for the follower problem, the one which is best for the leader is chosen, see, e.g., \cite{loridan1996weak}. We note that already mixed-integer bilevel linear programming (\MIBLOP) is $\Sigma_2^p$-hard~\cite{Lodi-et-al:2014}.

The \emph{value function reformulation} (\VFR) of the bilevel model~\eqref{bilevel} is given 
as
\begin{subequations}
	\label{vfr}
	\begin{align}
	&\min  ~c'x + d'y \\
	\st~ &Mx+Ny \geq h \label{ineq:linear}\\
	&\tilde{M}x+\tilde{N}y -  \tilde{h} \in \mathcal{K} \label{ineq:tilde}\\
	& Ax+By \geq f \label{ineq:linking}\\
	&q(y) \leq \Phi(x) \label{eq:inequValueFunction}\\
	& y \in \mathcal{Y} \label{ineq:yinY} \\
	&(x,y) \in \mathbb Z^n,
	\label{ineq:xyint}
	\end{align}
\end{subequations}
where the so-called \emph{value function} $\Phi(x)$ of the \emph{follower problem}
\begin{equation}
\Phi(x) = {\rm min}\left\{
q(y)  : Ax+By \geq f,~  y \in \mathcal{Y}, 
~ y \in \mathbb Z^{n_2}
\right\} \label{eq:follower}
\end{equation}
is typically non-convex and non-continuous. Note that the \VFR is equivalent to the original bilevel model~\eqref{bilevel}.
The \emph{high point relaxation} (\HPR) is obtained when 
dropping~\eqref{eq:inequValueFunction}, i.e., the optimality condition of $y$ 
for the follower problem, from the \VFR~\eqref{vfr}. 
We denote the continuous relaxation (i.e., replacing the integer constraint \eqref{ineq:xyint} with the corresponding variable bound constraints)
of the \HPR as \ccrHPR.
A solution $(x^*,y^*)$ is called \emph{bilevel infeasible}, if it is feasible for  \ccrHPR, but not feasible for the original bilevel model~\eqref{bilevel}.

\paragraph{Our Contribution}
Since
the seminal work of Balas~\cite{BalasDP}, and more intensively in the past three decades, disjunctive cuts (\DCs) have been successfully exploited for solving mixed-integer (nonlinear) programs (MI(N)LPs)~\cite{balas2018disjunctive}.  While there is a plethora of work on using \DCs for MINLPs~\cite{belotti2011disjunctive}, we are not aware of any previous applications of \DCs for solving \IBNPs.  
In this work we demonstrate how   
\DCs can be used within in a branch-and-cut (\BC) algorithm 
to solve \eqref{bilevel}. 
This is the first time that \DCs are used to separate bilevel infeasible points, using a cut-generating procedure based on second-order cone programming (\SOCP).   
Moreover, we also show that our \DCs can  be used in a finitely-convergent cutting plane procedure for 0-1 \IBNPs, where the \HPR is solved to optimality before separating bilevel infeasible points.
Our computational study is conducted on instances in which the follower minimizes a convex quadratic function, subject to a covering constraint linked with the leader. We compare the proposed \BC and cutting plane approaches  with a state-of-the-art solver for \MIBLOPs (which can solve our instances after applying linearization in a McCormick fashion), and show that the latter one is outperformed by our new DC-based methodologies. 


\paragraph{Literature Overview}
For \MIBLOPs with integrality restrictions on (some of) the follower variables, 
state-of-the-art computational methods are usually based on \BC (see, e.g.,
\cite{fischetti2016intersection,fischetti2017new,fischetti2018use,Tahernejad2020}). Other interesting 
concepts are based on multi-branching, 
see~\cite{wang2017watermelon,xu2014exact}. 

Considerably less results are available for non-linear \BPs, and in particular with integrality restrictions at the lower level. In~\cite{mitsos2008global}, Mitsos et al.\ propose a general approach for non-convex follower problems 
which solves nonlinear optimization problems to compute upper and lower bounds in an iterative fashion. 
In a series of papers on the so-called \emph{branch-and-sandwich} approach,  
tightened bounds on the optimal value function and on the leader's objective function value are calculated~\cite{KleniatiAdjiman2014b,kleniati2015generalization,KleniatiAdjiman2014a}. 
 A solution algorithm for mixed-\IBNPs proposed in~\cite{lozano2017value} by Lozano and Smith
approximates the value function by dynamically inserting additional variables and big-M type of constraints. 
Recently, Kleinert et al.~\cite{schmidt2021} considered bilevel problems with a mixed-integer convex-quadratic upper level and a continuous convex-quadratic lower level. The method is based on outer approximation after the problem is reformulated into a single-level one using the strong duality and convexification.  
In \cite{byeon2022benders}, Byeon and Van Hentenryck develop a solution algorithm for bilevel problems, where the leader problem can be modeled as a mixed-integer second-order conic problem and the follower problem can be modeled as a second-order conic problem. The algorithm is based on a dedicated Benders decomposition method.
In~\cite{weninger2020}, Weninger et al.\ propose a methodology that can tackle any kind of a MINLP at the upper level which can be handled by an off-the-shelf solver. The mixed-integer lower level problem has to be convex, bounded, and satisfy Slater’s condition for the continuous variables. This exact method is derived from a previous approach proposed in~\cite{YueGZY19} by Yue et al.\ for finding bilevel feasible solutions.
For a more detailed  overview of the recent literature on computational bilevel optimization we refer an interested reader to~\cite{CerulliThesis,kleinert2021survey,smith2020survey}.

The only existing application of  DCs in the context of bilevel \emph{linear} optimization is by Audet et al., \cite{audet} who derive DCs from LP-complementarity conditions. In~\cite{judice}, J{\'{u}}dice et al.\ exploit a similar idea for solving mathematical programs with equilibrium constraints.   
DCs are frequently used for solving MINLPs  (see, e.g., \cite{balas2018disjunctive}, and the many references therein, and 
\cite{MIQCP,MIQCPproj}). 
In~\cite{Kilinc2014}, K{\i}l{\i}n{\c{c}}-Karzan 
and Y{\i}ld{\i}z derive closed-form expressions for inequalities describing the convex hull of a two-term disjunction applied to the second-order cone. 

\section{Disjunctive Cut Methodology \label{sec:disj}}
The aim of this section is to derive \DCs for the bilevel model~\eqref{bilevel} with the help of SOCP, so we want to
derive \DCs that are able to separate 
bilevel infeasible points 
from the convex hull of bilevel feasible ones. 
Toward this end, we assume throughout this section that 
we have a second-order conic convex set $\mathcal{P}$, such that 
the set of feasible solutions of the \VFR is a subset of $\mathcal{P}$, and such that $\mathcal{P}$ is a subset of the set of feasible solutions of the \ccrHPR. 
This implies that $\mathcal{P}$ fulfills 
\eqref{ineq:linear},  \eqref{ineq:tilde}, \eqref{ineq:linking} and \eqref{ineq:yinY} and potentially already some \DCs.
Moreover, we assume that $(x^*,y^*)$ is a bilevel infeasible point in $\mathcal{P}$.
The point $(x^*,y^*)$ is an \emph{extreme point} of $\mathcal{P}$, if it is not a convex combination of any other two points of $\mathcal{P}$.

\subsection{Preliminaries}

For clarity of exposition in what follows, we  consider only one linking  
constraint of problem~\eqref{bilevel}, i.e., $m_2 = 1$ and thus $A = a'$ and $B = b'$ 
for some $a \in \Z^{n_1}$, $b \in \Z^{n_2}$ and $f \in \Z$. Note however that our methodology can be generalized for multiple linking constraints leading to one additional disjunction for every additional linking constraint. 
Moreover, our \DCs need the following assumptions.



\begin{Assumption}\label{as:bound1}
All variables are bounded in the \HPR and 
$\mathcal{Y}$ is bounded.
\end{Assumption}
Assumption~\ref{as:bound1} ensures that the \HPR is bounded. 
We note that in a bilevel-context already for the linear case of \MIBLOPs,
unboundedness of the \ccrHPR does not imply anything for the original problem, 
all three options (infeasible, unbounded, and existence of an optimum) are possible. For more details see, e.g., \cite{fischetti2018use}. 

\begin{Assumption}
For every $x$, such that there exists a $y$ with $(x,y)$ being feasible for the \ccrHPR, 
the follower problem~\eqref{eq:follower} is feasible.
\end{Assumption}

\begin{Assumption}
\label{as:relHPRsolvable}
\ccrHPR has a feasible solution satisfying its nonlinear constraint  \eqref{ineq:tilde}  strictly, and its dual has a  feasible solution. 
\end{Assumption}
Assumption~\ref{as:relHPRsolvable} ensures that we have strong duality between \ccrHPR
and its dual, and so we can solve the \ccrHPR (potentially with added cuts) to
arbitrary accuracy.




\subsection{Deriving Disjunctive Cuts}
\label{sec:DerivingCuts}

To derive \DCs we first examine bilevel feasible points. 
It is easy to see and also follows from the results by Fischetti et al.~\cite{fischetti2017new}, that 
for any $\hat y \in \mathcal{Y} \cap \mathbb{Z}^{n_2}$
the set 
$$
S(\hat y) = \{ (x,y) : a'x \geq f - b'\hat{y},~ q(y) > q(\hat{y})  \}
$$
does not contain any 
bilevel feasible solutions, as for any $(x,y) 
\in S(\hat y)$ clearly $\hat y$ is a better follower solution than $y$ for $x$. 
Furthermore, due to the integrality of our variables 
and 
of $a$ and $b$, the  
extended set 
\[
S^+(\hat y) = \{ (x,y) : a'x \geq f - b'\hat{y}-1,~ q(y) \geq q(\hat{y})  
\}
\]
does not contain any bilevel feasible solutions in its interior, because any 
bilevel feasible solution in the interior of $S^+(\hat y)$ 
is in $S(\hat y)$. 
Based on this observation intersection cuts have been derived in~\cite{fischetti2017new}, however $S^+(\hat y)$ is not convex in our case, so we turn our attention to \DCs.
As a result, for any  $\hat y \in \mathcal{Y}  \cap \mathbb{Z}^{n_2} $ any bilevel 
feasible solution is in the disjunction $\mathcal{D}_1(\hat{y}) \vee \mathcal{D}_2(\hat{y})$, where
\begin{align*}
\mathcal{D}_1(\hat{y}) : a'x \leq f - b'\hat{y}  - 1 
\qquad \text{ and } \qquad
\mathcal{D}_2(\hat{y}) : q(y) \leq q(\hat{y}).
\end{align*}
%
To find a \DC,
we want to generate valid linear inequalities
for
\begin{equation}
\label{eq:setOfDisjunction}
\left\{(x,y)\in \mathcal{P} : \mathcal{D}_1(\hat{y})\right\}
\vee
\left\{(x,y)\in \mathcal{P} : \mathcal{D}_2(\hat{y})\right\},
\end{equation}
so in other words we want to find a valid linear inequality that separates the bilevel infeasible solution $(x^*,y^*)$ from 
\begin{equation*}
\mathcal{D}(\hat{y}) = 
\conv \left(
\left\{(x,y)\in \mathcal{P} : \mathcal{D}_1(\hat{y})\right\}
\cup
\left\{(x,y)\in \mathcal{P} : \mathcal{D}_2(\hat{y})\right\}
\right).
\end{equation*}

Toward this end, we first derive a formulation of $\mathcal{P}$.
If we have already generated
some \DCs of the form $\alpha'x +\beta'y \geq \tau$, then they create a 
bunch of constraints
$ \mathcal{A}x + \mathcal{B} y \ge \mathcal{T}$. 
We take these cuts, together with $Mx+Ny \geq h$ and $a'x+b'y \geq f$ and also 
$y \in \mathcal{Y}$, which can be represented as $\mathcal{C} y \ge \mathcal{U}$,
and we bundle them all together as
\begin{align}
&\bar{M}x + \bar{N}y \geq \bar{h}, 
\label{ineq:bar}
\end{align}
such that $\mathcal{P}$ is represented by \eqref{ineq:bar} and \eqref{ineq:tilde}, and where
\begin{align*}
\bar M = \begin{pmatrix}
M \\
a' \\
\mathcal{A} \\
0 
\end{pmatrix}
,
\qquad
\bar N = \begin{pmatrix}
N \\
b' \\
\mathcal{B} \\
\mathcal{C}
\end{pmatrix}
,
\qquad
\bar h = \begin{pmatrix}
h \\
f \\
\mathcal{T} \\
\mathcal{U}
\end{pmatrix}
.
\end{align*}

The representation of $\mathcal{D}_1(\hat{y})$ is straightforward. 
It is convenient to write $\mathcal{D}_2(\hat{y})$ in 
SOCP-form using a standard technique.
Indeed, $\mathcal{D}_2(\hat{y})$ is equivalent to the standard 
second-order (Lorentz) cone constraint
$z^0 \geq \left\| (z^1,z^2) \right\|$ with 
\begin{align*}
z^0 = \frac{1-\left(\linq'y- q(\hat{y}) \right)}{2},
\qquad 
z^1 = Vy, 
\qquad
z^2 = \frac{1+\left(\linq'y- q(\hat{y})\right)}{2}.
\end{align*}
Because $z^0$, $z^1$ and $z^2$ are linear in $y$, we can as well write it in the form
\begin{align}
\label{con:newAtilde}
& \tilde{D}y - \tilde{c} \in \mathcal{Q}, 
\end{align}
where $\mathcal{Q}$ denotes a standard second-order cone,  which is self dual, and  
\[
\tilde{D} = \left(
\begin{array}{c}
-\frac{1}{2}\linq' \\[3pt]
V \\[3pt]
\frac{1}{2}\linq'
\end{array}
\right)
\qquad \text{ and } \qquad
\tilde{c} = \left(
\begin{array}{c}
\frac{-1-q(\hat{y})}{2} \\[3pt]
0
\\[3pt]
\frac{-1+q(\hat{y})}{2}
\end{array}
\right).
\]

We employ a scalar  
 dual multiplier  
 $\sigma$ for the constraint $\mathcal{D}_1(\hat{y})$ and we 
 employ a vector $\rho \in \mathcal{Q^*}$  of dual multipliers for 
 the constraint~\eqref{con:newAtilde}, representing~$\mathcal{D}_2(\hat{y})$. 
 Furthermore, we employ two vectors
 $\bar{\pi}_k$, $k=1,2$,
 of dual multipliers for the constraints \eqref{ineq:bar}
 and we employ two vectors
 $\tilde{\pi}_k$, $k=1,2$,
 of dual multipliers for the constraints \eqref{ineq:tilde}, both together representing $\mathcal{P}$.
Then
every $(\alpha,\beta,\tau)$ corresponding to
a valid linear inequality $\alpha'x +\beta'y \geq \tau$
for $\mathcal{D}(\hat{y})$ corresponds to a solution of
\begin{subequations}
\begin{align}
&\alpha' = \bar{\pi}_1'\bar{M} + \tilde{\pi}_1'\tilde{M} +\sigma 
{a}'\label{eq1} \\
&\alpha' = \bar{\pi}_2'\bar{M} + \tilde{\pi}_2'\tilde{M} \label{eq2} \\
&\beta' = \bar{\pi}_1'\bar{N} + \tilde{\pi}_1'\tilde{N}  \label{eq3} \\
&\beta' = \bar{\pi}_2'\bar{N} + \tilde{\pi}_2'\tilde{N}  +  \rho' \tilde{D} 
\label{eq4} \\
&\tau \leq \bar{\pi}_1'\bar{h} + \tilde{\pi}_1'\tilde{h} + 
\sigma(f-1-b'\hat{y}) \label{eq5} \\
&\tau \leq \bar{\pi}_2'\bar{h} + \tilde{\pi}_2'\tilde{h} + \rho' \tilde{c} 
\label{eq6} \\
&\bar{\pi}_1 \geq 0,~ \bar{\pi}_2 \geq 0,~
\tilde{\pi}_1  \in \mathcal{K^*},~ \tilde{\pi}_2  \in \mathcal{K^*},~
\sigma\leq 0,~ \rho \in \mathcal{Q^*}, \label{eq7}
\end{align}
\end{subequations}
 where $\mathcal{K}^*$ and $\mathcal{Q^*}$ are the dual cones of  $\mathcal{K}$ and $\mathcal{Q}$, respectively
 (see, e.g., Balas~\cite[Theorem 1.2]{balas2018disjunctive}).

To attempt to generate a valid inequality for $\mathcal{D}(\hat{y})$ that is violated by
the bilevel infeasible solution
$(x^*,y^*)$, we solve
\begin{align*}
&\max\ \tau - \alpha'x^* -\beta'y^* \tag{CG-SOCP} \label{CG-SOCP}\\
\st &~\eqref{eq1}\mbox{--}\eqref{eq7}.
\end{align*}
A positive objective value for a feasible
$(\alpha,\beta,\tau)$ corresponds to
a valid linear inequality $\alpha'x +\beta'y \geq \tau$
for $\mathcal{D}(\hat{y})$
violated by
$(x^*,y^*)$, i.e. the inequality gives a \DC separating $(x^*,y^*)$ from  $\mathcal{D}(\hat{y})$. 

Finally, we need to deal with the fact that the feasible region of \eqref{CG-SOCP}
is a cone. So \eqref{CG-SOCP} either has its optimum at the origin (implying that
$(x^*,y^*)$ cannot be separated), or \eqref{CG-SOCP} is unbounded, implying that
there is a violated inequality, which of course we could scale by any positive
number  so as to make the violation as large as we like. The standard remedy for
this is to introduce a normalization constraint to \eqref{CG-SOCP}.
A typical good choice (see~\cite{Fischetti2011})
is to impose $\|(\bar{\pi}_1, \bar{\pi}_2, \tilde{\pi}_1, \tilde{\pi}_2, 
\sigma, \rho  )\|_1\ \leq 1$,
but in our context, because we are using a conic solver,
we can more easily and efficiently  impose
 $\|(\bar{\pi}_1, \bar{\pi}_2, \tilde{\pi}_1, \tilde{\pi}_2, \sigma, \rho  
 )\|_2\ \leq 1$,
 which is just one constraint for a conic solver. Thus, we will from now on consider normalization as part of \eqref{CG-SOCP}.

To be able to derive \DCs we make the following additional assumption.
\begin{Assumption}
\label{as:CGSOCPsolvable}
The dual of \eqref{CG-SOCP} has a feasible solution in its interior
and we have an exact solver for \eqref{CG-SOCP}.
\end{Assumption}

\noindent We have the following theorem, which allows us to use \DCs in solution methods.
 
 \begin{theorem}\label{tm:getCut}
Let $\mathcal{P}$ be a second-order conic convex set, such that the set of feasible solutions of the \VFR is a subset of $\mathcal{P}$, and such that $\mathcal{P}$ is a subset of the set of feasible solutions of the \ccrHPR.
 Let $(x^*,y^*)$ be bilevel infeasible and be an extreme point of $\mathcal{P}$. 
 Let $\hat y$ be a feasible solution to the follower problem for $x=x^*$
(i.e., $\hat y \in \mathcal{Y} \cap \mathbb Z^{n_2}$ 
and $a'x^*+b' \hat{y} \geq f$) such that $q(\hat{y}) < q(y^*)$.
 
 Then there is a \DC that separates $(x^*,y^*)$ from  $\mathcal{D}(\hat{y})$ and it can be obtained by solving \eqref{CG-SOCP}.
 \end{theorem}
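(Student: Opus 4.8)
The plan is to reduce the statement to three routine facts: (i) $(x^*,y^*)\notin\mathcal{D}(\hat y)$; (ii) $\mathcal{D}(\hat y)$ is a closed convex set, so $(x^*,y^*)$ can be strictly separated from it by a linear inequality valid for $\mathcal{D}(\hat y)$; and (iii) any such separating inequality corresponds, via conic duality, to a feasible solution of \eqref{CG-SOCP} of strictly positive objective value, which by Assumption~\ref{as:CGSOCPsolvable} forces the optimum of \eqref{CG-SOCP} to be positive and hence to produce a \DC.

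First I would verify that $(x^*,y^*)$ satisfies neither $\mathcal{D}_1(\hat y)$ nor $\mathcal{D}_2(\hat y)$: since $\hat y$ is follower-feasible at $x^*$ we have $a'x^*\ge f-b'\hat y>f-b'\hat y-1$, so $\mathcal{D}_1(\hat y)$ fails, and $q(y^*)>q(\hat y)$ by hypothesis makes $\mathcal{D}_2(\hat y)$ fail. The only place the extreme-point hypothesis enters — and the step I expect to be the real obstacle — is upgrading this to $(x^*,y^*)\notin\mathcal{D}(\hat y)$. For this I would use that $P_i:=\{(x,y)\in\mathcal{P}:\mathcal{D}_i(\hat y)\}$ is convex for $i=1,2$ ($\mathcal{P}$ is convex, $\mathcal{D}_1(\hat y)$ is a halfspace, and $\{(x,y):q(y)\le q(\hat y)\}$ is convex because $q$ is a convex quadratic), so that — grouping a Carathéodory representation by the piece each atom comes from — every point of $\mathcal{D}(\hat y)=\conv(P_1\cup P_2)$ is of the form $\lambda p_1+(1-\lambda)p_2$ with $p_i\in P_i$ and $\lambda\in[0,1]$ (if one $P_i$ is empty, $\mathcal{D}(\hat y)$ is the other piece and the previous sentence already gives the claim; if both are empty, the claim is vacuous). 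If $(x^*,y^*)$ admitted such a representation, then $\lambda\in\{0,1\}$ would put it in $P_1$ or $P_2$, while $\lambda\in(0,1)$ together with extremality of $(x^*,y^*)$ in $\mathcal{P}$ would force $p_1=p_2=(x^*,y^*)\in P_1$; either way this contradicts the first sentence of this paragraph.

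Next I would invoke strict separation. By Assumption~\ref{as:bound1} the set $\mathcal{P}$ — contained in the feasible region of \ccrHPR and cut out by linear and second-order-cone constraints — is closed and bounded, hence compact; therefore $P_1$ and $P_2$, and thus $\mathcal{D}(\hat y)=\conv(P_1\cup P_2)$, are compact, in particular closed and convex. Applying the separating hyperplane theorem to $(x^*,y^*)\notin\mathcal{D}(\hat y)$ yields $(\alpha,\beta,\tau)$ with $\alpha'x+\beta'y\ge\tau$ valid on $\mathcal{D}(\hat y)$ and $\tau-\alpha'x^*-\beta'y^*>0$.

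Finally I would pass to \eqref{CG-SOCP}. Validity of $\alpha'x+\beta'y\ge\tau$ on each $P_i$ yields, by the conic disjunctive-programming correspondence stated just before the theorem (after Balas~\cite[Theorem~1.2]{balas2018disjunctive}, whose nontrivial direction is legitimized by the strong-duality Assumptions~\ref{as:relHPRsolvable} and \ref{as:CGSOCPsolvable}), multipliers $(\bar{\pi}_1,\bar{\pi}_2,\tilde{\pi}_1,\tilde{\pi}_2,\sigma,\rho)$ satisfying \eqref{eq1}--\eqref{eq7}; together with $(\alpha,\beta,\tau)$ this is a feasible point of \eqref{CG-SOCP} with objective $\tau-\alpha'x^*-\beta'y^*>0$. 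This multiplier vector is nonzero, since otherwise \eqref{eq1}, \eqref{eq3}, \eqref{eq5} would give $\alpha=\beta=0$ and $\tau\le 0$, contradicting positivity; hence, using that the constraints \eqref{eq1}--\eqref{eq7} and the objective are positively homogeneous, I would rescale the solution by a suitable positive constant so that the $\ell_2$-normalization holds while the objective stays positive. Thus the optimum of \eqref{CG-SOCP} is strictly positive, and by Assumption~\ref{as:CGSOCPsolvable} we obtain an optimal $(\alpha^*,\beta^*,\tau^*)$; the easy direction of the same correspondence shows $\alpha^{*\prime}x+\beta^{*\prime}y\ge\tau^*$ is valid for $\mathcal{D}(\hat y)$ and violated by $(x^*,y^*)$, i.e.\ the desired \DC. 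As noted, I expect the extreme-point argument in the second paragraph to be the only genuinely delicate point; the remainder is standard separation and conic-duality bookkeeping under the stated assumptions.
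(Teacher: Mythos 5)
Your proposal is correct and follows essentially the same route as the paper's proof: the central step in both is that $(x^*,y^*)$ violates both $\mathcal{D}_1(\hat y)$ and $\mathcal{D}_2(\hat y)$, so membership in $\mathcal{D}(\hat y)$ would force it to be a proper convex combination of a point of $\mathcal{P}$ in each piece, contradicting extremality in $\mathcal{P}$. Your additional bookkeeping (compactness of $\mathcal{D}(\hat y)$ for strict separation, and the positive-homogeneity rescaling to meet the normalization in \eqref{CG-SOCP}) only makes explicit what the paper leaves to the construction of \eqref{CG-SOCP} and Assumption~\ref{as:CGSOCPsolvable}.
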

 \begin{proof}
 Assume that there is no cut that separates $(x^*,y^*)$ from  $\mathcal{D}(\hat{y})$, then $(x^*,y^*)$ is in $\mathcal{D}(\hat{y})$. 
 However, due to the definition of $\hat{y}$, 
 the point $(x^*,y^*)$ does not fulfill $\mathcal{D}_1(\hat{y})$ and does not fulfill $\mathcal{D}_2(\hat{y})$.
 Therefore, in order to be in $\mathcal{D}(\hat{y})$, the point $(x^*,y^*)$ must be a convex combination of one point in $\mathcal{P}$ that fulfills $\mathcal{D}_1(\hat{y})$, and 
 another point in $\mathcal{P}$ that fulfills $\mathcal{D}_2(\hat{y})$. This is not possible due to the fact that $(x^*,y^*)$ is an extreme point of $\mathcal{P}$.
 
 Thus, there is a cut that separates $(x^*,y^*)$ from  $\mathcal{D}(\hat{y})$. By construction of \eqref{CG-SOCP} and due to   Assumption~\ref{as:CGSOCPsolvable}, we can use \eqref{CG-SOCP}  to find it.\qed
 \end{proof}
 

 Note that there are two reasons why a feasible \ccrHPR solution $(x^*,y^*)$ 
 is bilevel infeasible: it is not integer or $y^*$ is not the optimal follower response for $x^*$.
 Thus, in the case that $(x^*,y^*)$ is integer, there is a better follower response $\tilde{y}$ for $x^*$. Then Theorem~\ref{tm:getCut} with  
 $\hat{y} = \tilde{y}$ implies that $(x^*,y^*)$ can be separated
 from  $\mathcal{D}(\hat{y})$.
 We present solution methods based on this observation in Section~\ref{sec:intcut}.
 

\subsection{Separation Procedure for Disjunctive Cuts}
\label{sec:chooseyhat}


We turn our attention to describing how to computationally separate our \DCs for a solution $(x^*,y^*) \in \mathcal{P}$ now.
Note that we do not necessarily need the optimal solution of the follower problem~\eqref{eq:follower} for $x=x^*$ to be able to cut off a bilevel infeasible solution $(x^*,y^*)$, as any $\hat y$ that is feasible for the follower problem  with $q(\hat y)< q(y^*)$ gives a violated \DC as described in Theorem~\ref{tm:getCut}. Thus, we implement two different strategies for separation which are described in Algorithm~\ref{alg:separation}. 

In the first one, denoted as \texttt{O}, we solve the follower problem to optimality, and use the optimal $\hat y$ in \eqref{CG-SOCP}. In the second strategy, denoted as \texttt{G}, for each feasible integer follower solution $\hat y$ with a better objective value than $q(y^*)$ obtained during solving the follower problem, we try 
to solve \eqref{CG-SOCP}. The procedure returns the first found significantly violated cut, i.e., it finds a \DC greedily. A cut $\alpha'x +\beta'y \geq \tau$ is considered to be \emph{significantly violated} by $(x^*,y^*)$ if $\tau - \alpha'x^* -\beta'y^*> \varepsilon$ for some $\varepsilon>0$. 

If $(x^*,y^*)$ is a bilevel infeasible solution satisfying integrality constraints, Algorithm~\ref{alg:separation} returns a violated cut with both strategies. 
Otherwise, i.e., if $(x^*,y^*)$ is not integer, a cut may not be obtained, because it is possible that there is no feasible $\hat y$ for the follower problem with $q(\hat y)< q(y^*)$.


\SetKwRepeat{Do}{do}{while}

\begin{algorithm}[tb]
\LinesNumbered
\SetKwInOut{Input}{Input}\SetKwInOut{Output}{Output}
\Input{A feasible \ccrHPR solution $(x^*, y^*)$, a separation \emph{strategy} \texttt{O} or \texttt{G}, a set $\mathcal{P}$} 
\Output{A significantly violated disjunctive cut or nothing} 
\BlankLine 
{
\While{the follower problem is being solved for $x=x^*$ by an enumeration based method}
{
\For{each feasible integer $\hat{y}$ with $q(\hat{y})<q(y^*)$}
{
\If{$\textit{strategy}=\texttt{G}$ or ($\textit{strategy}=\texttt{O}$ and $\hat{y}$ is optimal)}
{
solve (CG-SOCP) for $(x^*, y^*)$, $\hat{y}$ and $\mathcal{P}$\;
\If{$\tau - \alpha'x^* -\beta'y^*>\varepsilon$ }
{
\Return{$\alpha'x +\beta'y \geq \tau$}\;
}
}}}}
\caption{\texttt{separation}\label{alg:separation}}
\end{algorithm}

 \section{Solution Methods Using Disjunctive Cuts}

We now present two solution methods based on \DCs, one applicable for the general bilevel model~\eqref{bilevel}, one dedicated to a binary version of~\eqref{bilevel}.

\subsection{A Branch-and-Cut Algorithm  
\label{sec:bc}}

We propose to use the \DCs in a \BC algorithm to solve the bilevel model~\eqref{bilevel}. The \BC can be obtained by modifying any given continuous-relaxation-based \BB algorithm to solve the \HPR (assuming that there is an off-the-shelf solver for \ccrHPR that always returns an extreme optimal solution $(x^*,y^*)$ like e.g., a simplex-based \BB for a linear \ccrHPR 
\footnote{This assumption is without loss of generality, as we can outer approximate second-order conic constraints of $\mathcal{P}$ and get an extreme optimal point by a simplex method.}).

The algorithm works as follows: Use \ccrHPR as initial relaxation $\mathcal P$ at the root-node of the \BC. Whenever a solution $(x^*,y^*)$ which is integer is encountered in a \BC node, call the \DC separation. If a violated cut is found, add the cut to the set $\mathcal P$
(which also contains, e.g., variable fixing by previous branching decisions, previously added globally or locally valid \DCs, \ldots)
of the current \BC node, otherwise the solution is bilevel feasible and the incumbent can be updated. 
Note that \DCs are only locally valid except the ones from the root node, since $\mathcal{P}$ includes branching decisions.
If $\mathcal{P}$ is empty
or optimizing over $\mathcal{P}$ leads to an objective function value that is larger than the objective function value of the current incumbent, we fathom the current node.
In our implementation, we also use \DC separation for fractional $(x^*,y^*)$ as described in Section~\ref{sec:chooseyhat} for strengthening the relaxation. 

\begin{theorem}\label{th:finite}
The \BC solves the bilevel model~\eqref{bilevel} in a finite number of 
\BC-iterations under our assumptions.
\end{theorem}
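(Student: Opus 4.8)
The plan is to argue finite termination by combining three ingredients: finiteness of the outer branch-and-bound tree, finiteness of the number of disjunctive cuts that can be separated at any single node, and correctness of the fathoming/incumbent-update rules. First I would observe that by Assumption~\ref{as:bound1} all variables of the \HPR are bounded, so the integer branching of a continuous-relaxation-based \BB on the variables $(x,y) \in \mathbb{Z}^n$ has only finitely many possible branching refinements; hence, provided no node is processed infinitely often, the tree is finite. Thus it suffices to show that each node is processed finitely often, i.e.\ that only finitely many \DCs are added before the node is either fathomed or declared to contain only bilevel feasible integer points.

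The key step is the following claim: at a given node, each time an integer $(x^*,y^*)$ is returned as an extreme optimal solution of the current relaxation $\mathcal{P}$ and a violated \DC is found, that cut strictly separates $(x^*,y^*)$ from $\mathcal{P} \cap \mathcal{D}(\hat y)$, and in particular removes the specific integer point $(x^*,y^*)$ from the feasible set of the updated $\mathcal{P}$. Since $\mathcal{P}$ is always contained in the \ccrHPR, whose integer points lie in the bounded set dictated by Assumption~\ref{as:bound1}, there are only finitely many integer points that can ever be returned at this node; each iteration that produces a cut eliminates at least one of them permanently (the cut, added to $\mathcal P$, is valid for all bilevel feasible points, so no bilevel feasible integer point is ever lost, while the separated $(x^*,y^*)$ can never reappear). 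Hence after finitely many cut rounds at the node, either the relaxation becomes infeasible or exceeds the incumbent bound (the node is fathomed), or the returned integer extreme point is not separable, which by Theorem~\ref{tm:getCut} (in the contrapositive: if $(x^*,y^*)$ is integer and bilevel infeasible there is a better follower response $\hat y$, hence a separating \DC) means $(x^*,y^*)$ is bilevel feasible and the incumbent is updated, after which we branch or fathom. The \DC separation on \emph{fractional} points is only used for strengthening and can be invoked a bounded number of times (or simply capped), so it does not affect finiteness.

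Putting these together: the outer tree is finite, each node spawns finitely many cut-generating iterations, and each leaf either is fathomed by bound/infeasibility or yields a bilevel feasible incumbent; optimistic optimality then follows because every \DC is valid for $\conv$ of the bilevel feasible set, so the best incumbent found equals the optimum of~\eqref{bilevel} (using that the \HPR objective is a valid lower bound and that by Assumption~\ref{as:bound1} and the feasibility assumptions the problem, if feasible, attains its optimum at an integer point that survives all cuts). I would also note the subtlety that $\mathcal{P}$ must always return an \emph{extreme} optimal point for Theorem~\ref{tm:getCut} to apply, which is guaranteed by the simplex-based (outer-approximated) relaxation solver as stated in the footnote.

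\textbf{Main obstacle.} The delicate point is the per-node finiteness argument: one must be sure that adding a \DC genuinely and permanently excludes the integer point $(x^*,y^*)$ that was separated, i.e.\ that the cut is violated by a definite amount and that $(x^*,y^*)$ cannot be regenerated by the relaxation solver after the cut is in $\mathcal{P}$. This rests on the separation being ``significant'' (violation $> \varepsilon$) in Algorithm~\ref{alg:separation} and on $(x^*,y^*)$ being an extreme point; handling the interaction between locally valid cuts, branching-induced variable fixings, and the requirement that the relaxation solver still returns extreme optima is where the proof needs the most care. A secondary concern is ruling out an infinite exchange between fractional-point strengthening cuts and re-solving, which is why I would treat those as optional/bounded rather than essential to convergence.
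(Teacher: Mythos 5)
Your proposal is correct and takes essentially the same route as the paper's proof, which argues (i) that termination with an integer bilevel infeasible point is impossible because Theorem~\ref{tm:getCut} guarantees a violated \DC, (ii) that a suboptimal bilevel feasible output is impossible because \DCs never cut off bilevel feasible points, and (iii) that non-termination is impossible because integrality and boundedness (Assumption~\ref{as:bound1}) give a finite tree and finitely many integer points, hence finitely many calls to the follower problem and \eqref{CG-SOCP}. You are in fact somewhat more careful than the paper on per-node finiteness --- making explicit that each significantly violated cut permanently removes the separated integer extreme point from $\mathcal{P}$ while preserving all bilevel feasible ones --- a step the paper compresses into the single remark that there are only finitely many integer points.
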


\begin{proof}


First, suppose the \BC terminates, but the solution $(x^*,y^*)$ is not bilevel feasible. This is not possible, as by Theorem~\ref{tm:getCut} and the observations thereafter the \DC generation procedure finds a violated cut to cut off the integer point $(x^*,y^*)$ in this case. 

Next, suppose the \BC terminates and the solution $(x^*,y^*)$ is bilevel feasible, but not optimal. This is not possible, since by construction the \DCs never cut off any bilevel feasible solution.

Finally, suppose the \BC never terminates. This is not possible, as all variables are integer and bounded, thus there is only a finite number of nodes in the \BC tree. Moreover, this means there is also a finite number of integer points $(x^*,y^*)$, thus we solve the follower problem and $\eqref{CG-SOCP}$ a finite number of times.
The follower problem is discrete and can therefore be solved in a finite number of iterations.
\end{proof}

\subsection{An Integer Cutting Plane Algorithm}
\label{sec:intcut}

The \DCs can be directly used in a cutting plane algorithm under the following assumption. 
\begin{Assumption}\label{as:binary}
All variables in the bilevel model~\eqref{bilevel} are binary variables.
\end{Assumption}
The algorithm is detailed in Algorithm~\ref{alg:cutting}. 
It starts with the \HPR as initial relaxation of \VFR, which is solved to optimality. Then the chosen \DC separation routine (either \texttt{O} or \texttt{G}) is called to check if the obtained integer optimal solution is bilevel feasible. If not, the obtained \DC is added to the relaxation to cut off the optimal solution, and the procedure is repeated with the updated relaxation. 

Due to Assumption~\ref{as:binary} each obtained integer optimal solution is an extreme point of the convex hull of \ccrHPR, and thus due to Theorem~\ref{tm:getCut} a violated cut will be produced by the \DC separation if the solution is not bilevel feasible.
\SetKwRepeat{Do}{do}{while}
\begin{algorithm}[htb]
\LinesNumbered
\SetKwInOut{Input}{Input}\SetKwInOut{Output}{Output}
\Input{An instance of problem \eqref{bilevel} where all variables are binary} 
\Output{An optimal solution $(x^*,y^*)$} 
{
$\mathcal R \gets$ \HPR; $\mathcal P \gets$ set of feasible solutions of \ccrHPR; $\texttt{violated} \gets True$\;
\Do{\texttt{violated}}{
$\texttt{violated} \gets False$\;
solve $\mathcal R$ to optimality, let $(x^*,y^*)$ be the obtained optimal solution\;
call \texttt{separation} for $(x^*,y^*)$ and $\mathcal{P}$ with strategy \texttt{O} or \texttt{G}\; 
\If{a violated cut is found for $(x^*,y^*)$}
{
$\texttt{violated} \gets True$; add the violated cut to $\mathcal R$ and to $\mathcal P$\;
}
}
\Return{$(x^*,y^*)$}
}
\caption{\texttt{cutting plane}\label{alg:cutting}}
\end{algorithm} 
\def\usebb{} 
\let\usebb\undefined
\ifdefined\usebb
\subsection{A Branch-and-Bound Algorithm  
\label{sec:bb}}

The \BB used to solve \eqref{bilevel} follows the \BB for \MIBLOPs described in~\cite[Section 3]{fischetti2018use}. The \BB needs the auxiliary algorithm \texttt{refine} described in Algorithm~\ref{alg:refine}. For a given \HPR solution $(x^*,y^*)$, this algorithm returns the optimal solution for the restricted version of \eqref{bilevel} where $x_i=x^*_i$ for all $i \in I$. This solution is needed for the node-fathoming procedure of the \BB. 

\begin{algorithm}[h!tb]
\LinesNumbered
\SetKwInOut{Input}{Input}\SetKwInOut{Output}{Output}
\Input{An \HPR\ solution $(x^*, y^*)$} 
\Output{A bilevel-feasible solution $(\xh,\yy)$ with $\xh_i=x^*_i$ for all $i \in I$} 
\BlankLine 
{	                                                                    
	Solve the follower problem for $x=x^*$ to compute $\Phi(x^*)$\; \label{Step:Phi}
	temporarily add the following constraints to \HPR: $x_i=x^*_i$ for all $i\in I$, and $q(y) \le \Phi(x^*)$\; \label{Step:fix} 
	solve the modified \HPR\; \label{Step:solve_rHPR}
	\lIf{the modified \HPR\ is unbounded} {\Return ``The problem is unbounded''} \label{Step:unb}
	Let $(\xh,\yy)$ be the optimal solution found, and \Return $(\xh, \yy)$ \label{Step:rHPR}
}
\caption{\texttt{refine}\label{alg:refine}}
\end{algorithm} 

The \BB is based on the \ccrHPR and proceeds by using standard branching on the integer-constrained variables $x_i$, $i\in I$ and $y_j$, $j\in J$ that are fractional for the relaxation at a given node in the \BB tree. If the relaxation at a given node is infeasible, or the objective value of the relaxation is worse than the objective value of the incumbent, the node is pruned as usual. However, different to a standard \BB algorithm, once a node-solution fulfills all integrality requirements (i.e., it is a feasible solution to the \HPR), we do not directly have a new incumbent but call the procedure \texttt{BilevelNodeProcessing} instead, which is described Algorithm~\ref{alg:bb}.

\begin{algorithm}
\LinesNumbered
\SetKwInOut{Input}{Input}\SetKwInOut{Output}{Output}
\Input{An \HPR\ solution $(x^*, y^*)$} 
\Output{updated incumbent \emph{or} node is pruned \emph{or} two new children nodes} 
\BlankLine 

			compute $\Phi(x^*)$ by solving the follower problem for $x=x^*$\;\label{Step:feas1bis}
			\If {$q(y^*) \le \Phi(x^*)$ }
			{update the incumbent\;
			\Return
			} 
	
	  \eIf {all variables $x_i$, $i \in I$ are fixed by branching \label{Step:ref0} }
				{ call Algorithm~\ref{alg:refine} with $(x^*,y^*)$\; \label{Step:ref}
					possibly update the incumbent with the resulting solution $(\xh,\yy)$, if any\;
					prune the current node \label{Step:ref2}
				}
		 		{branch on any $x_i$, $i \in I$ not fixed by branching yet}

\caption{\texttt{BilevelNodeProcessing}}\label{alg:bb}
\end{algorithm}  

In this procedure, we first check if the node-solution $(x^*,y^*)$ is bilevel feasible by comparing the follower objective value obtained for $y^*$ against the optimal follower solution value for $x^*$. If the solution is bilevel feasible, we update the incumbent and prune the node. If the solution is not bilevel feasible, there are two possible cases: Either we have already fixed all integer variables $x$ by branching or not. In the latter case, we continue with branching on a non-fixed integer variable (even though the variable is already integer in the relaxation). In case we have fixed all integer variables $x$ by branching we call \texttt{refine} to find the best bilevel feasible solution for the integer-part of $x^*$. If the obtained solution is better than the incumbent, we update the incumbent. In any case, after doing this, we can prune the node.

We apply the separation routine described in Section~\ref{sec:chooseyhat} to any relaxation solution $(x^*,y^*)$ we encounter during the \BB. We note that in theory, the \DC separation for any integer $(x^*,y^*)$ within a standard \BB using the \ccrHPR would already be enough to ensure a correct algorithm for solving problem \eqref{bilevel}, but in practice, these cuts could fail to cut off a bilvel infeasible solution due to numerical issues.
\fi
\section{Computational Analysis}
\label{sec:comp}

In this section we present preliminary computational results. 

\subsection{Instances}
In our computations, we  consider the quadratic bilevel covering problem
\begin{subequations}
\label{qcbp}
\begin{align}
&\min  ~ \exSymb{c}' x + \exSymb{d}'y  \\
\st &~\exSymb{M}' x + \exSymb{N}' y  \ge \exSymb{h} \label{qcbp:pureLeaderCon} \\ 
&y  \in \arg \min \{ y' \exSymb{R} y : \exSymb{a}' x + \exSymb{b}' y  \ge \exSymb{f}, ~y \in \{0,1\}^{n_2} 
\}\\
&x \in \{0,1\}^{n_1},\label{eq:BQKP} 
\end{align}
\end{subequations}
where 
$\exSymb{c} \in \R^{           n_1}$, 
$\exSymb{d} \in \R^{           n_2}$, 
$\exSymb{M} \in \R^{m_1 \times n_1}$, 
$\exSymb{N} \in \R^{m_1 \times n_2}$, 
$\exSymb{h} \in \R^{           m_1}$, 
$\exSymb{R} =\exSymb{V}'\exSymb{V} \in \Z^{n_2 \times n_2}$, 
$\exSymb{a} \in \Z^{n_1}$, 
$\exSymb{b} \in \Z^{n_2}$, and 
$\exSymb{f} \in \Z$.
This problem can be seen as the covering-version of the quadratic bilevel knapsack problem studied by Zenarosa et al.\ in~\cite{zenarosa2021exact} (there it is studied with a quadratic non-convex leader objective function, only one leader variable and no leader constraint \eqref{qcbp:pureLeaderCon}). The linear variant of such a bilevel knapsack-problem is studied in, e.g., \cite{BrotcorneHM09,Brotcorne}. We note that~\cite{BrotcorneHM09,Brotcorne,zenarosa2021exact} use problem-specific solution approaches to solve their respective problem.
The structure of~\eqref{qcbp} allows an easy linearization of the nonlinear follower objective function using a standard McCormick-linearization to transform the problem into an \MIBLOP. Thus we can compare the performance of our algorithm against a state-of-the-art \MIBLOP-solver \MIX from Fischetti et al.~\cite{fischetti2017new} to get a first impression of whether our development of a dedicated solution approach for \IBNPs exploiting nonlinear techniques is a promising endeavour.

We generated random instances in the following way. We consider $n_1 = n_2$ for $n_1+n_2 = n\in \{20, 30, 40, 50\}$ and we study instances with no (as in~\cite{zenarosa2021exact}) and with one leader constraint \eqref{qcbp:pureLeaderCon}, so $m_1 \in \{0,1\}$. For each $n$ we create five random instances for $m_1=0$ and five random instances for $m_1=1$.
Furthermore, we chose all entries of $\exSymb{c}$, $\exSymb{d}$, $\exSymb{M}$, $\exSymb{N}$, $\exSymb{a}$, and $\exSymb{b}$ randomly from the interval $[0,99]$. The values of $\exSymb{h}$ and $\exSymb{f}$ are set to the sum of the entries of the corresponding rows in the constraint matrices divided by four.
The matrix $\exSymb{V}\in \R^{n_2 \times n_2}$ has random entries from the interval $[0,9]$.



\subsection{Computational Environment}

All experiments are executed on a single thread of an Intel Xeon E5-2670v2 machine with 2.5 GHz processor 
with a memory limit of 8 GB and a time limit of 600 seconds.
Our \BC algorithm and our cutting plane algorithm both are implemented in C++. They make use of 
IBM ILOG CPLEX 12.10 (in its default settings) as branch-and-cut framework in our \BC algorithm and as solver for $\mathcal R$ in our cutting plane algorithm.
During the \BC, CPLEX’s internal heuristics are allowed and a bilevel infeasible heuristic solution is just discarded if a violated cut cannot be obtained. For calculating the follower response $\hat y$ for a given $x^*$, we also use CPLEX. For solving \eqref{CG-SOCP}, we use MOSEK~\cite{mosek}. 

\subsection{Numerical Results}
In this preliminary computational study we use a simplified version of both the the B\&C and the cutting plane algorithm, namely we always use the initial $\mathcal{P}$, i.e., the \ccrHPR, as input for the separation of \DCs and do not update it.

While executing our B\&C algorithm, we consider four different settings for the separation of cuts. \texttt{I} and \texttt{IF} denote the settings where only integer solutions are separated and where both integer and fractional solutions are separated, respectively. For each of them, we separate the cuts using the routine \texttt{separation} with strategies \texttt{O} and \texttt{G}, which is indicated with an ``-\texttt{O}" or ``-\texttt{G}" next to the relevant setting name. The resulting four settings are \texttt{I-O}, \texttt{IF-O}, \texttt{I-G} and \texttt{IF-G}. Similarly, the cutting plane algorithm is implemented with both separation strategies, leading to the settings $\texttt{CP-O}$ and $\texttt{CP-G}$. We determine the minimum acceptable violation $\varepsilon=10^{-6}$ for our experiments.
During the integer separation of $(x^*,y^*)$, while solving the follower problem, we make use of the follower objective function value $q(y^*)$, by setting it as an upper cutoff value. This is a valid approach because a violated \DC exists only if $\Phi(x^*)<q(y^*)$. 

\begin{table}[tbp]
    \setlength{\tabcolsep}{4pt}
  \centering
  \caption{Results for the quadratic bilevel covering problem.}
    \begin{tabular}{crrrrrrrrrr}
    \hline
    \multicolumn{1}{c}{$n$} & \multicolumn{1}{c}{\texttt{Setting}} & \multicolumn{1}{c}{$t$} & \multicolumn{1}{c}{\texttt{Gap}} & \multicolumn{1}{c}{\texttt{RGap}} & \multicolumn{1}{c}{\texttt{Nodes}} & \multicolumn{1}{c}{\texttt{nICut}} & \multicolumn{1}{c}{\texttt{nFCut}} & \multicolumn{1}{c}{$t_F$} & \multicolumn{1}{c}{$t_S$} & \multicolumn{1}{c}{\texttt{nSol}} \\
    \hline
    \multirow{4}[2]{*}{20} & \texttt{I-O}     & 1.6   & 0.0   & 42.9  & 158.9 & 44.0  & 0.0   & 0.7   & 0.4   & 10 \\
          & \texttt{IF-O}     & 7.0   & 0.0   & 46.1  & 82.8  & 13.5  & 151.3 & 5.0   & 1.5   & 10 \\
          & \texttt{I-G}     & 1.1   & 0.0   & 42.6  & 192.4 & 56.7  & 0.0   & 0.2   & 0.4   & 10 \\
          & \texttt{IF-G}     & 3.3   & 0.0   & 42.1  & 102.4 & 17.3  & 183.9 & 0.7   & 2.0   & 10 \\
    \hline
    \multirow{4}[2]{*}{30} & \texttt{I-O}     & 26.1  & 0.0   & 40.4  & 2480.0 & 325.5 & 0.0   & 22.3  & 2.2   & 10 \\
          & \texttt{IF-O}     & 246.5 & 9.6   & 45.9  & 522.6 & 24.9  & 2104.1 & 216.0 & 20.3  & 8 \\
          & \texttt{I-G}     & 2.7   & 0.0   & 48.6  & 1630.6 & 226.1 & 0.0   & 0.4   & 1.7   & 10 \\
          & \texttt{IF-G}      & 55.2  & 0.0   & 39.6  & 669.6 & 29.9  & 1631.7 & 5.9   & 40.5  & 10 \\
    \hline
    \multirow{4}[2]{*}{40} & \texttt{I-O}     & 262.0 & 3.6   & 70.4  & 9209.4 & 1308.8 & 0.0   & 233.5 & 18.6  & 8 \\
          & \texttt{IF-O}     & 439.9 & 35.7  & 66.8  & 391.3 & 30.1  & 1751.9 & 390.7 & 43.8  & 4 \\
          & \texttt{I-G}     & 82.3  & 0.0   & 67.9  & 14225.5 & 1379.1 & 0.0   & 4.2   & 47.1  & 10 \\
          & \texttt{IF-G}      & 387.1 & 6.1   & 64.0  & 1039.8 & 53.4  & 3783.1 & 22.0  & 331.4 & 8 \\
    \hline
    \multirow{4}[2]{*}{50} & \texttt{I-O}     & 537.6 & 46.3  & 72.5  & 10921.1 & 1553.6 & 0.0   & 458.4 & 67.5  & 2 \\
          & \texttt{IF-O}     & 600.0 & 71.6  & 72.7  & 156.3 & 24.8  & 1272.5 & 545.2 & 51.5  & 0 \\
          & \texttt{I-G}     & 417.9 & 20.2  & 71.8  & 93621.8 & 6928.2 & 0.0   & 17.6  & 102.5 & 4 \\
          & \texttt{IF-G}      & 519.8 & 40.5  & 72.8  & 2537.6 & 56.0  & 12548.1 & 45.4  & 244.9 & 3 \\
    \hline
    \end{tabular}%
  \label{tab:aggregatedResults}%
\end{table}%

The results of the \BC algorithm are presented in Table~\ref{tab:aggregatedResults}, as averages of the problems with the same size $n$. We provide the solution time $t$ (in seconds), the optimality gap \texttt{Gap} at the end of time limit (calculated as $100 (z^*-LB)/z^*$, where $z^*$ and $LB$ are the best objective function value and the lower bound, respectively), the root gap \texttt{RGap} (calculated as $100 (z^*_R-LB_r)/z^*$, where $z^*_R$ and $LB_r$ are the best objective function value and the lower bound at the end of the root node, respectively), the number of \BC nodes \texttt{Nodes}, the numbers of integer \texttt{nICut} and fractional cuts \texttt{nFCut}, the time $t_F$ to solve the follower problems, the time $t_S$ to solve \eqref{CG-SOCP}, and the number of optimally solved instance \texttt{nSol} out of 10. \texttt{I-G} is the best performing setting in terms of solution time and final optimality gaps. Although \texttt{IF-O} and \texttt{IF-G} yield smaller trees, they are inefficient because of invoking the separation routine too often, which is computationally costly. Therefore, they are not included in further comparisons.

\begin{figure}[tbp]
\begin{center}
\includegraphics[width=0.5\textwidth]{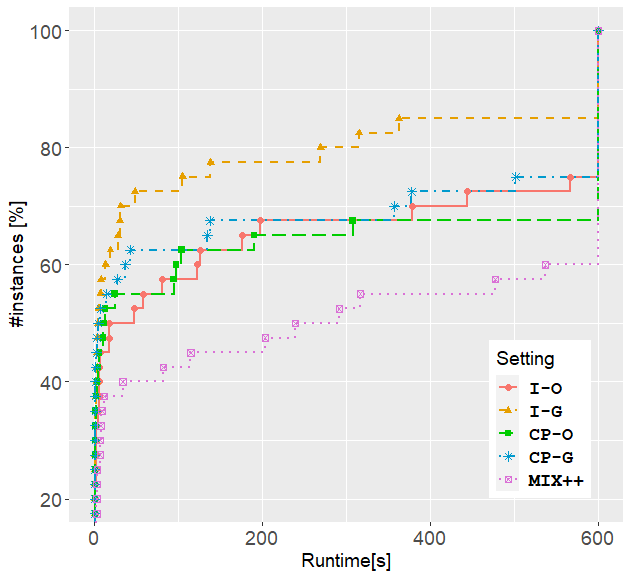}%
\includegraphics[width=0.5\textwidth]{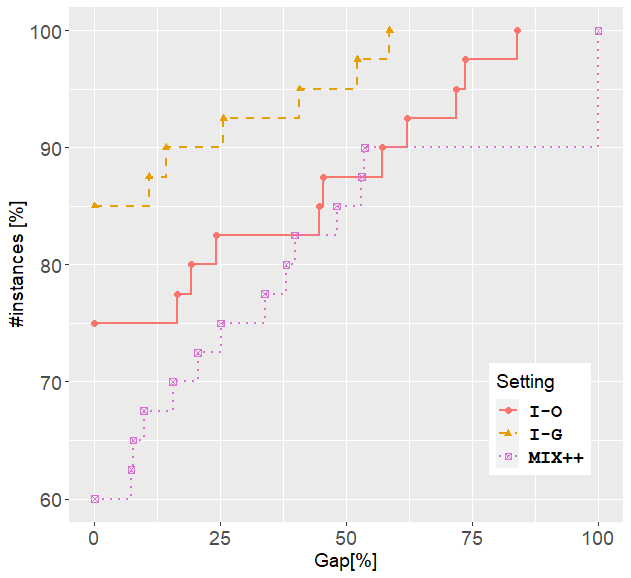}
\end{center}

\caption{Runtimes and final optimality gaps for the quadratic bilevel covering problem under our different settings and the benchmark solver \MIX.}
\label{figure:Runtime-Gap}
\end{figure}

In Figure~\ref{figure:Runtime-Gap}, we compare the B\&C results with the results obtained by the cutting plane algorithm as well as a state-of-the-art \MIBLOP solver \MIX of Fischetti et al.~\cite{fischetti2017new}, which is able to solve the linearized version of our instances. Figure~\ref{figure:Runtime-Gap} shows the cumulative distributions of the runtime and the optimality gaps at the end of the time limit. 
It can be seen that settings with \texttt{G} perform better than their \texttt{O} counterparts. While \texttt{CP-O} and \texttt{CP-G} perform close to \texttt{I-O}, they are significantly outperformed by \texttt{I-G}. The solver \MIX is also outperformed by both the cutting plane algorithm and the B\&C. 

\section{Conclusions}
In this article we showed that SOCP-based \DCs are an effective and promising methodology for solving a challenging family of discrete \BPs with a convex quadratic objective and linear constraints at the lower level.   

There are still many open questions for future research. From the computational 
perspective, dealing with multiple linking constraints at the lower level 
requires an implementation of a SOCP-based separation procedure based on 
multi-disjunctions. The implementation can also be extended to deal with 
second-order cone constraints at the upper level. Moreover, the proposed \BC 
could be enhanced by bilevel-specific preprocessing procedures, or 
bilevel-specific valid inequalities (as this has been done for \MIBLOPs in 
e.g., 
\cite{fischetti2016intersection,fischetti2017new,fischetti2018use}). 
Problem-specific strengthening 
inequalities could be used within disjunctions to obtain stronger DCs, and 
finally outer-approximation could be used as an alternative to SOCP-based 
separation. 
\bibliographystyle{splncs04}
\bibliography{disj}

\begin{thebibliography}{10}
\providecommand{\url}[1]{\texttt{#1}}
\providecommand{\urlprefix}{URL }
\providecommand{\doi}[1]{https://doi.org/#1}

\bibitem{agor2019feature}
Agor, J., {\"O}zalt{\i}n, O.Y.: Feature selection for classification models via
  bilevel optimization. Comput. Oper. Res.  \textbf{106},  156--168 (2019)

\bibitem{audet}
Audet, C., Haddad, J., Savard, G.: Disjunctive cuts for continuous linear
  bilevel programming. Optim. Lett.  \textbf{1}(3),  259--267 (2007)

\bibitem{BalasDP}
Balas, E.: Disjunctive programming. In: Hammer, P.L., Johnson, E.L., Korte,
  B.H. (eds.) Annals of Discrete Mathematics 5: Discrete Optimization, pp.
  3--51. North Holland (1979)

\bibitem{balas2018disjunctive}
Balas, E.: Disjunctive programming. Springer (2018)

\bibitem{belotti2011disjunctive}
Belotti, P., Liberti, L., Lodi, A., Nannicini, G., Tramontani, A., et~al.:
  Disjunctive inequalities: applications and extensions. In: Cochran, J.J.,
  Cox, L.A., Keskinocak, P., Kharoufeh, J.P., Smith, J.C. (eds.) Wiley
  Encyclopedia of Operations Research and Management Science, vol.~2, pp.
  1441--1450. John Wiley \& Sons (2011)

\bibitem{BrotcorneHM09}
Brotcorne, L., Hanafi, S., Mansi, R.: A dynamic programming algorithm for the
  bilevel knapsack problem. Oper. Res. Lett.  \textbf{37}(3),  215--218 (2009)

\bibitem{Brotcorne}
Brotcorne, L., Hanafi, S., Mansi, R.: One-level reformulation of the bilevel
  knapsack problem using dynamic programming. Discrete Optim.  \textbf{10}(1),
  1--10 (2013)

\bibitem{byeon2022benders}
Byeon, G., Van~Hentenryck, P.: Benders subproblem decomposition for bilevel
  problems with convex follower. INFORMS J. Comput.  (2022)

\bibitem{CerulliThesis}
Cerulli, M.: Bilevel optimization and applications. Ph.D. thesis, \'Ecole
  Polytechnique, Paris (2021)

\bibitem{dempe2020bilevel}
Dempe, S., Zemkoho, A.: Bilevel optimization. Springer (2020)

\bibitem{fischetti2016intersection}
Fischetti, M., Ljubi{\'c}, I., Monaci, M., Sinnl, M.: Intersection cuts for
  bilevel optimization. In: Louveaux, Q., Skutella, M. (eds.) Integer
  Programming and Combinatorial Optimization. pp. 77--88. Springer (2016)

\bibitem{fischetti2017new}
Fischetti, M., Ljubi{\'c}, I., Monaci, M., Sinnl, M.: A new general-purpose
  algorithm for mixed-integer bilevel linear programs. Oper. Res.
  \textbf{65}(6),  1615--1637 (2017)

\bibitem{fischetti2018use}
Fischetti, M., Ljubi{\'c}, I., Monaci, M., Sinnl, M.: On the use of
  intersection cuts for bilevel optimization. Math. Program.  \textbf{172}(1),
  77--103 (2018)

\bibitem{Fischetti2011}
Fischetti, M., Lodi, A., Tramontani, A.: On the separation of disjunctive cuts.
  Math. Program.  \textbf{128}(1),  205--230 (2011)

\bibitem{fontaine2020population}
Fontaine, P., Crainic, T.G., Gendreau, M., Minner, S.: Population-based risk
  equilibration for the multimode hazmat transport network design problem. Eur.
  J. Oper. Res.  \textbf{284}(1),  188--200 (2020)

\bibitem{gonzalez2021global}
Gonz{\'a}lez-D{\'\i}az, J., Gonz{\'a}lez-Rodr{\'\i}guez, B., Leal, M., Puerto,
  J.: Global optimization for bilevel portfolio design: Economic insights from
  the {D}ow {J}ones index. Omega  \textbf{102},  102353 (2021)

\bibitem{grimm2021optimal}
Grimm, V., Orlinskaya, G., Schewe, L., Schmidt, M., Z{\"o}ttl, G.: Optimal
  design of retailer-prosumer electricity tariffs using bilevel optimization.
  Omega  \textbf{102},  102327 (2021)

\bibitem{judice}
J{\'{u}}dice, J.J., Sherali, H.D., Ribeiro, I.M., Faustino, A.M.: A
  complementarity-based partitioning and disjunctive cut algorithm for
  mathematical programming problems with equilibrium constraints. J. Global
  Optim.  \textbf{36}(1),  89--114 (2006)

\bibitem{Kilinc2014}
K{\i}l{\i}n{\c{c}}-Karzan, F., Y{\i}ld{\i}z, S.: Two-term disjunctions on the
  second-order cone. In: Lee, J., Vygen, J. (eds.) Integer Programming and
  Combinatorial Optimization. pp. 345--356. Springer (2014)

\bibitem{schmidt2021}
Kleinert, T., Grimm, V., Schmidt, M.: Outer approximation for global
  optimization of mixed-integer quadratic bilevel problems. Math. Program.
  \textbf{188}(2),  461--521 (2021)

\bibitem{kleinert2021survey}
Kleinert, T., Labb{\'e}, M., Ljubi{\'c}, I., Schmidt, M.: A survey on
  mixed-integer programming techniques in bilevel optimization. EURO J. Comput.
  Optim.  \textbf{9} (2021)

\bibitem{KleniatiAdjiman2014b}
Kleniati, P.M., Adjiman, C.S.: Branch-and-sandwich: a deterministic global
  optimization algorithm for optimistic bilevel programming problems. {P}art
  {II}: Convergence analysis and numerical results. J. Global Optim.
  \textbf{60}(3),  459--481 (2014)

\bibitem{kleniati2015generalization}
Kleniati, P.M., Adjiman, C.S.: A generalization of the branch-and-sandwich
  algorithm: from continuous to mixed-integer nonlinear bilevel problems.
  Comput. Chem. Eng.  \textbf{72},  373--386 (2015)

\bibitem{KleniatiAdjiman2014a}
Kleniati, P.M., Adjiman, C.S.: Branch-and-sandwich: a deterministic global
  optimization algorithm for optimistic bilevel programming problems. {P}art
  {I}: Theoretical development. J. Global Optim.  \textbf{60}(3),  425--458
  (2014)

\bibitem{labbe2016bilevel}
Labb{\'e}, M., Violin, A.: Bilevel programming and price setting problems. Ann.
  Oper. Res.  \textbf{240}(1),  141--169 (2016)

\bibitem{Lodi-et-al:2014}
Lodi, A., Ralphs, T.K., Woeginger, G.J.: Bilevel programming and the separation
  problem. Math. Program.  \textbf{146}(1),  437--458 (2014)

\bibitem{loridan1996weak}
Loridan, P., Morgan, J.: Weak via strong {S}tackelberg problem: {N}ew results.
  J. Global Optim.  \textbf{8}(3),  263--287 (1996)

\bibitem{lozano2017value}
Lozano, L., Smith, J.C.: A value-function-based exact approach for the bilevel
  mixed-integer programming problem. Oper. Res.  \textbf{65}(3),  768--786
  (2017)

\bibitem{mitsos2008global}
Mitsos, A., Lemonidis, P., Barton, P.I.: Global solution of bilevel programs
  with a nonconvex inner program. J. Global Optim.  \textbf{42}(4),  475--513
  (2008)

\bibitem{mosek}
{MOSEK ApS}: MOSEK Fusion API for C++ manual. Version 9.2. (2021),
  \url{http://docs.mosek.com/9.2/toolbox/index.html}

\bibitem{plein2021bilevel}
Plein, F., Th{\"u}rauf, J., Labb{\'e}, M., Schmidt, M.: Bilevel optimization
  approaches to decide the feasibility of bookings in the {E}uropean gas
  market. Math. Method. Oper. Res.  (2021)

\bibitem{MIQCP}
Saxena, A., Bonami, P., Lee, J.: Convex relaxations of non-convex mixed integer
  quadratically constrained programs: {E}xtended formulations. Math. Program.
  \textbf{124},  383--411 (2010)

\bibitem{MIQCPproj}
Saxena, A., Bonami, P., Lee, J.: {C}onvex relaxations of non-convex mixed
  integer quadratically constrained programs: {P}rojected formulations. Math.
  Program.  \textbf{130},  359--413 (2010)

\bibitem{smith2020survey}
Smith, J.C., Song, Y.: A survey of network interdiction models and algorithms.
  Eur. J. Oper. Res.  \textbf{283}(3),  797--811 (2020)

\bibitem{Tahernejad2020}
Tahernejad, S., Ralphs, T., DeNegre, S.: A branch-and-cut algorithm for mixed
  integer bilevel linear optimization problems and its implementation. Math.
  Program. Comput.  \textbf{12},  529--568 (2020)

\bibitem{wang2017watermelon}
Wang, L., Xu, P.: The watermelon algorithm for the bilevel integer linear
  programming problem. {SIAM} J. Optimiz.  \textbf{27}(3),  1403--1430 (2017)

\bibitem{weninger2020}
Weninger, D., Orlinskaya, G., Merkert, M.: An exact projection-based algorithm
  for bilevel mixed-integer problems with nonlinearities (2020),
  \url{http://www.optimization-online.org/DB_HTML/2020/12/8153.html}

\bibitem{xu2014exact}
Xu, P., Wang, L.: An exact algorithm for the bilevel mixed integer linear
  programming problem under three simplifying assumptions. Comput. Oper. Res.
  \textbf{41},  309--318 (2014)

\bibitem{YueGZY19}
Yue, D., Gao, J., Zeng, B., You, F.: A projection-based reformulation and
  decomposition algorithm for global optimization of a class of mixed integer
  bilevel linear programs. J. Global Optim.  \textbf{73}(1),  27--57 (2019)

\bibitem{zenarosa2021exact}
Zenarosa, G.L., Prokopyev, O.A., Pasiliao, E.L.: On exact solution approaches
  for bilevel quadratic 0--1 knapsack problem. Ann. Oper. Res.
  \textbf{298}(1),  555--572 (2021)

\end{thebibliography}

\end{document}